\newtheorem{thm}{Theorem}[section]
\newtheorem{cor}[thm]{Corollary}
\newtheorem{lem}[thm]{Lemma}
\newtheorem{prop}[thm]{Proposition}
\theoremstyle{definition}
\theoremstyle{remark}
\newtheorem{rem}[thm]{Remark}
\numberwithin{equation}{section}
\newcommand{\dQ}{\mathbb{Q}}
\newcommand{\dC}{\mathbb{C}}
\newcommand{\dZ}{\mathbb{Z}}
\newcommand{\cP}{\mathcal{P}}
\newcommand{\fp}{\frak{p}}
\long\def\forget#1\forgotten{}
\newcommand{\nn}{{\mathbb{N}}}
\newcommand{\qq}{{\mathbb{Q}}}
\newcommand{\rr}{{\mathbb{R}}}
\newcommand{\zz}{{\mathbb{Z}}}
\begin{document}

\author{Joachim K$\ddot{\rm o}$nig}
\address{Department of Mathematics\\
Technion -- Israel Institute of Technology\\
Haifa, 32000\\
Israel }
\email{koenig.joach@technion.ac.il}

\author{Daniel Rabayev}
\address{Department of Mathematics\\
Technion -- Israel Institute of Technology\\
Haifa, 32000\\
Israel }
\email{daniel.raviv@gmail.com}

\author{Jack Sonn}
\address{Department of Mathematics\\
Technion -- Israel Institute of Technology\\
Haifa, 32000\\
Israel }
\email{sonn@math.technion.ac.il}

\title[ ]{Galois realizations with inertia groups of order two}
\subjclass[2010]{Primary 11R32;
 Secondary 11R09}

\maketitle
\begin{abstract}
There are several variants of the inverse Galois problem which
involve restrictions on ramification.  In this paper we give
sufficient conditions that a given finite group $G$ occurs infinitely
often as a Galois group over the rationals $\dQ$ with all nontrivial
inertia groups of order $2$.  Notably any such realization of $G$ can be translated up to a quadratic field over which the corresponding realization of $G$ is unramified.
 The sufficient conditions are imposed on a parametric polynomial with
Galois group $G$--if such a polynomial is available--and the infinitely many realizations come from infinitely many specializations of the parameter in the polynomial.  This will be applied to the three finite simple groups $A_5$, $PSL_2(7)$ and $PSL_3(3)$.  Finally, the applications to $A_5$ and $PSL_3(3)$ are used to prove the existence of infinitely many optimally intersective realizations of these groups over the rational numbers (proved for $PSL_2(7)$ by the first author in \cite{jk}).
\end{abstract}

\section{Introduction}\label{sec:exp2}
\vskip 2em
There are several variants of the inverse Galois problem which
involve restrictions on ramification.  For example, given a finite
group  $G$ and a finite set  $S$ of primes, does there exist a
Galois extension of the rationals $\dQ$ unramified outside $S$ with
Galois group $G$?  A weaker  form of this problem is the "minimal
ramification problem", which asks what is the minimal number of
ramified primes in a Galois realization of a given finite group $G$
over $\dQ$.  In this paper, instead of restrictions on the set of ramified primes or on the number of ramified primes, we are interested in restrictions on the inertia
groups of the ramified primes.  For example, a result of Van der Waerden \cite{VdW} says
that if the discriminant of a monic irreducible polynomial $f(x)\in \dZ[x]$ of degree $n$ is
divisible by a prime $p$ to the exact power $1$, then the inertia
group of a prime divisor of $p$ in the splitting field of $f(x)$ is
of order $2$ and generated by a transposition.  Furthermore, if the discriminant is squarefree, then the Galois group is the symmetric group $S_n$ (and from the preceding part, all the nontrivial inertia groups have order $2$ and generated by a transposition (see also Kondo \cite
{ko}).  Given a finite group $G$ which is realizable over $\dQ(t)$ as a Galois group of a regular extension by a parametric polynomial $f(t,x)$, if certain conditions on this polynomial hold, then we prove that there are infinitely many specializations of $t$ for which, in the corresponding splitting field, all nontrivial inertia groups have order $2$.  In addition to an illustrative example $G=S_3$, we give three nontrivial examples of such polynomials, all with nonabelian simple Galois groups: $A_5$, $PSL_2(7)$ and $PSL_3(3)$.

\vskip .5em
An important application of this result is the following:
given any realization $K/\dQ$ of a finite group $G$ with all nontrivial inertia groups of order $2$, there exists a quadratic extension $E/\dQ$ such that $G(KE/E)\cong G$ and $KE/E$ is unramified at all primes (finite and  infinite).
\vskip .5em
Finally, we give an application of this result to intersective polynomials.  A monic polynomial in one variable with rational integer coefficients is called (nontrivially){\it intersective} if it has a root modulo $m$ for all positive integers $m$, and it has no rational root.    Let $G$ be a finite noncyclic group and let $r(G)$ be the smallest number of irreducible factors of an intersective
 polynomial with Galois group $G$ over $\dQ$.
There is a group-theoretically defined lower bound for $r(G)$, given by the
 smallest number $s(G)$ of proper subgroups of $G$ having the property that the union of the
 conjugates of those subgroups is $G$ and their intersection is trivial.
 We call an intersective polynomial $f(x)$ with Galois group $G$ over $\dQ$ {\it optimally intersective for}  $G$ if the above lower bound $s(G)$ for $r(G)$ is attained, i.e. $f(x)$ is the product of $s(G)$ irreducible factors.  Accordingly, a Galois extension $K/\dQ$ with Galois group $G$ is called an {\it optimally intersective realization of} $G$ if it is the splitting field of a polynomial which is  optimally intersective for  $G$.  We apply the order two inertia result above to
$A_5$ and $PSL_3(3)$ to prove the existence of infinitely many optimally intersective Galois realizations of $A_5$ and $PSL_3(3)$ over $\dQ$.  This has already been done by the first author for $PSL_2(7)$ \cite {jk}.  Such results for $D_5$ and $A_4$ have appeared earlier \cite{spearman1}, \cite{spearman}.

\vskip 0.5em
We are grateful to Danny Neftin for valuable discussions.

\section{order two inertia}\label{sec:order2}
In what follows we use the following immediate generalization of (a special case of) a theorem of Schinzel \cite{sch}:
\vskip 0.5em
\begin{lem}\label{thm:schinzel}
Let $F_1,..., F_n\in \zz[t]$ and assume that $gcd(F_1,...,F_n)=:N\in \nn$. Furthermore assume that the only primes dividing $gcd(F_1(t_0),..., F_n(t_0))$ for all $t_0\in \zz$ simultaneously, are the prime divisors of $N$.
Then there exists an arithmetic progression of elements $t_0\in \zz$ such that
$gcd(F_1(t_0),..., F_n(t_0))$ is  divisible only by prime divisors of $N$.
\end{lem}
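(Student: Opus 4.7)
The plan is to reduce the problem to controlling only finitely many ``bad'' primes, and then to construct the desired arithmetic progression via the Chinese Remainder Theorem. The hypothesis $\gcd(F_1, \ldots, F_n) = N$ in $\zz[t]$ lets one write $F_i = N \cdot G_i$ with $G_i \in \zz[t]$ and $\gcd(G_1, \ldots, G_n) = 1$ in $\zz[t]$, hence also in $\qq[t]$. Applying B\'ezout in the PID $\qq[t]$ and clearing denominators yields polynomials $H_1, \ldots, H_n \in \zz[t]$ and a positive integer $d$ with
\[
\sum_{i=1}^{n} H_i(t) G_i(t) = d.
\]
Evaluating at any $t_0 \in \zz$ then shows that $\gcd(G_1(t_0), \ldots, G_n(t_0))$ divides $d$, so the only primes outside the prime divisors of $N$ that can ever divide $\gcd(F_1(t_0), \ldots, F_n(t_0))$ lie in the finite set $P$ of prime divisors of $d$ not dividing $N$.

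With $P$ finite, I would next invoke the hypothesis: for each $p \in P$, since $p$ does not divide $N$, there is some $t_p \in \zz$ with $p \nmid \gcd(F_1(t_p), \ldots, F_n(t_p))$, equivalently some index $i_p$ with $p \nmid F_{i_p}(t_p)$. The Chinese Remainder Theorem then furnishes $a \in \zz$ satisfying $a \equiv t_p \pmod{p}$ for every $p \in P$. Setting $m := \prod_{p \in P} p$, for any $t_0 \equiv a \pmod{m}$ and any $p \in P$ one has $F_{i_p}(t_0) \equiv F_{i_p}(t_p) \not\equiv 0 \pmod{p}$, so $p \nmid \gcd(F_1(t_0), \ldots, F_n(t_0))$. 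Combined with the B\'ezout bound from the first step, this forces $\gcd(F_1(t_0), \ldots, F_n(t_0))$ to be divisible only by primes of $N$, so the arithmetic progression $a + m\zz$ is as required.

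There is no genuine obstacle in this argument; the entire content of the lemma is the B\'ezout reduction carried out in the first step, which replaces the \emph{a priori} infinite family of primes one would have to handle with the finite list of prime divisors of a single integer $d$. Once that finiteness is in hand, the CRT construction and the verification that the resulting arithmetic progression works are immediate, and the small generalization over the classical Schinzel statement is simply the bookkeeping required to track a nontrivial content $N$ rather than assuming $N = 1$ outright.
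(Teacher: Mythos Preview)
Your argument is correct. The B\'ezout identity in $\qq[t]$ cleared of denominators gives the finite set $P$ of potentially troublesome primes, and the CRT step handles them exactly as you describe; the edge case $P=\emptyset$ causes no difficulty (take $m=1$).

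The paper's proof is different in presentation though not in underlying content: it simply divides through by $N$ to reduce to the coprime case $\gcd(G_1,\ldots,G_n)=1$ with no prime dividing all values $\gcd(G_1(t_0),\ldots,G_n(t_0))$, and then quotes Schinzel's theorem \cite[Thm.~1, Cor.~1]{sch} as a black box to obtain the arithmetic progression. Your write-up unpacks that black box: the B\'ezout-plus-CRT argument you give is essentially the proof of the cited result of Schinzel in the special case needed here. So your approach is more self-contained and elementary, at the cost of a few extra lines; the paper's approach is shorter but defers the actual work to the reference.
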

\begin{proof}
The polynomials $G_i:=F_i/N$ lie in the UFD $\zz[t]$ by hypothesis, $gcd(G_1,...,G_n)=1$, and no prime divides $gcd(G_1(t_0),..., G_n(t_0))$ for all $t_0\in \zz$ simultaneously.  Then by the theorem (and proof) of Schinzel \cite [Thm. 1, Cor. 1,  p. 241]{sch}, applied to the $G_i$, the lemma follows.

\end{proof}

Let $f(t,x)\in \dZ[t,x]$ be  monic and separable of degree $n>2$ with respect to $x$.
Let $f'(t,x)\in \dZ[t,x]$ denote the derivative of $f(t,x)$
with respect to $x$, and let $D(t)=D(f(t,x))$ denote the discriminant of $f(t,x)$  with
respect to $x$.  Note that $D(t) \in \dZ[t]$.  (Indeed, $D(t)$ is a symmetric polynomial in the roots of $f$, hence by the fundamental theorem on symmetric polynomials, $D(t)$ is a polynomial in the elementary symmetric functions, with coefficients in $\dZ$.  The elementary symmetric functions are the coefficients of $f$ viewed as a polynomial in $x$.  Thus $D(t) \in \dZ[t]$.)   Now consider $f'(t,x)=nx^{n-1}+\cdots \in \dZ[t,x]$, which is not monic for $n>1$.  Consider $n^{n-2}f'(t,x)=n^{n-1}x^{n-1}+\cdots=:{\underline f'}(t,nx)$.  The polynomial ${\underline f'}(t,y)\in \dZ[t,y]$ is monic in $y$.  For $\alpha$ algebraic over $\dQ(t)$ we have $f'(t,\alpha)=0 \Leftrightarrow {\underline f'}(t,n\alpha)=0 $.  Let $\alpha_1,...,\alpha_{n-1}$ be the roots of $f'(t,x)$.  Then $D({\underline f'}(t,x))=\prod_{i<j}(n\alpha_i-n\alpha_j)^2= n^{(n-1)(n-2)}\prod_{i<j}(\alpha_i-\alpha_j)^2=
 n^{(n-1)(n-2)}D(f'(t,x))$.

\vskip 0.5em

The main result is as follows.  \vskip 0.5em
 \begin{thm} \label{thm:inertia}
Assume the notations and assumptions above. \vskip 0.5em

1. Assume \vskip 0.5em
(i) The greatest common divisor of $D(f(t,x))$ and $(n-1)nD({\underline f'(t,x)})=:N \in \dZ$. \vskip 0.5em
(ii) There exists $c \in \dZ$ such that the only primes dividing both $D(f(c,x))$ and $(n-1)nD({\underline f'(c,x)})$ are prime divisors of $N$.\vskip 0.5em

Then there exists an arithmetic progression of elements $c \in \dZ$ such that in the splitting field of $f(c,x)$, all inertia groups in the corresponding Galois group at primes not dividing $N$ are of order $\leq 2$.  In particular, when $N=1$,  all inertia groups are of order $\leq 2$.
\vskip 0.5em
2. Assume $f(t,x)\in \dZ[t,x]$ and there exists an arithmetic progression of elements $c \in \dZ$ such that in the splitting field of $f(c,x)$, all inertia groups in the corresponding Galois group at primes not dividing $N$ are of order $\leq 2$ (i.e. the conclusion of part 1 above holds). Assume further:

(iii) $f(t,x)$ is irreducible in $\dQ[t,x]$ with Galois group $G $ over $\dQ(t)$.  \vskip 0.5em
 (iv) The splitting field of $f(t,x)$ over $\dQ(t)$ is regular over $\dQ$.\vskip 0.5em

Then there exist infinitely many $c \in \dZ$ such that $f(c,x)$ has Galois group $G$ over $\dQ$, the corresponding splitting fields are linearly disjoint over $\dQ$, and all inertia groups at primes not dividing $N$ are of order $\leq 2$.
\end{thm}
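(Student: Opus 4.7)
The plan is to treat the two parts in sequence. Part~1 reduces to Lemma~\ref{thm:schinzel} followed by a local Van der Waerden-style analysis of ramification; Part~2 then follows via Hilbert's irreducibility theorem (HIT) and a standard inductive linear-disjointness argument.

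\emph{Part 1.} I apply Lemma~\ref{thm:schinzel} with $F_1(t):=D(f(t,x))$ and $F_2(t):=(n-1)n\,D(\underline{f'}(t,x))$ in $\dZ[t]$. Hypothesis (i) gives $\gcd(F_1,F_2)=N$ in $\dZ[t]$, and hypothesis (ii) rules out any prime outside the divisors of $N$ from dividing $\gcd(F_1(c_0),F_2(c_0))$ for every $c_0\in\dZ$; the lemma then yields an arithmetic progression of $c$'s such that every prime divisor of $\gcd(F_1(c),F_2(c))$ divides $N$. For such a $c$ and any prime $p\nmid N$ that divides $D(f(c,x))$, the condition $p\nmid (n-1)n\,D(\underline{f'}(c,x))$, combined with the identity $D(\underline{f'}(c,x))=n^{(n-1)(n-2)}D(f'(c,x))$, forces $p\nmid n(n-1)$ and that $f'(c,x)$ has pairwise distinct roots modulo $p$. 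Hence $f(c,x)\bmod p$ admits no root of multiplicity $\geq 3$; every repeated mod-$p$ root is a simple double root, and by the standard local analysis (Van der Waerden, Kondo) each such root contributes at most a quadratically ramified factor of $f(c,x)$ over $\dQ_p$. Assembling these contributions, the inertia group at every prime of the splitting field of $f(c,x)$ above $p$ has order $\leq 2$.

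\emph{Part 2.} By (iii)--(iv), HIT furnishes a thin set $H\subset\dZ$ such that for $c\in\dZ\setminus H$ the specialization $f(c,x)$ is irreducible over $\dQ$ with splitting field having Galois group $G$. Since the arithmetic progression from Part~1 has positive density while $H$ has density zero, their difference is infinite, and every $c$ in it yields a realization of $G$ with the desired inertia bound. Linear disjointness of the resulting splitting fields is achieved inductively: having selected $c_1,\dots,c_{k-1}$ with linearly disjoint splitting fields $L_1,\dots,L_{k-1}$ over $\dQ$, I apply HIT to $f(t,x)$ regarded as a polynomial over $L_1\cdots L_{k-1}(t)$ (which is Hilbertian as a finite extension of $\dQ$, and over which $f(t,x)$ retains Galois group $G$ by the regularity (iv)), thereby excluding a further thin set; choosing $c_k$ in the arithmetic progression outside this thin set forces $L_k\cap(L_1\cdots L_{k-1})=\dQ$ while preserving the Part~1 conclusion.

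\emph{Main obstacle.} The essential technical step is the local ramification argument in Part~1: translating the combinatorial condition ``$f'(c,x)$ has distinct roots modulo $p$'' into the arithmetic statement that inertia in the \textbf{full} splitting field over $\dQ$ (not merely in the stem field $\dQ(\alpha)$) is of order $\leq 2$. A priori, several mod-$p$ double roots could combine in the Galois closure to produce a non-cyclic inertia subgroup of order $\geq 4$; one must use that the relevant local extensions sit inside the tame tower (for $p>2$), whose inertia is pro-cyclic, and handle $p=2$ by a direct local analysis. Granted this step, Part~2 is a routine blend of HIT and induction.
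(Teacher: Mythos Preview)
Your proposal is correct and follows essentially the same strategy as the paper (Schinzel's lemma for Part~1, then HIT for Part~2). Two points where the paper's execution is slightly cleaner than yours: First, rather than arguing at the stem-field level (no triple root of $f(c,x)$ mod $p$, hence each local factor is at most quadratically ramified) and then worrying about passage to the Galois closure, the paper works directly in the splitting field: if $\sigma$ in an inertia group has order $>2$, some root $\beta$ has $\sigma$-orbit of length $\ge 3$, whence $\beta\equiv\sigma(\beta)\equiv\sigma^2(\beta)\pmod{\mathfrak p}$ forces a triple factor $(x-\beta)^3$ of $f(c,x)$ mod $\mathfrak p$, contradicting $p\nmid D(f')$; this gives \emph{exponent} $\le 2$ for inertia, and then tameness (cyclic inertia) upgrades this to \emph{order} $\le 2$. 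Second, your flagged ``main obstacle'' at $p=2$ dissolves: you already deduced $p\nmid n(n-1)$ for any ramified $p\nmid N$, and since $n(n-1)$ is even this forces $p\ne 2$, so $2$ is unramified (when $2\nmid N$) and no separate local analysis is needed.
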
\vskip 0.5em

 \begin{proof} 1. By Lemma \ref{thm:schinzel}, there exists an arithmetic progression $\cP=\{a+bt:t \in \dZ \}$ in $\dZ$ ($b\neq 0$) such that for every $c\in \cP$, the only primes dividing both $D(f(c,x))$ and $(n-1)nD({\underline f'(c,x)})$ are prime divisors of $N$.
 From the equation $D({\underline f'}(t,x))=n^{(n-1)(n-2)}D(f'(t,x))$ above, we have $D({\underline f'}(c,x))=n^{(n-1)(n-2)}D(f'(c,x))$, so {\it a fortiori} the only primes dividing both $D(f(c,x))$ and $(n-1)nD(f'(c,x))$ are prime divisors of $N$.
In particular the prime $2$ does not divide the discriminant of the corresponding splitting field $K$ and is therefore unramified in $K$.  We show next that all inertia groups at primes not dividing $N$ have \it exponent \rm at most $2$.  Fix $c$ and assume $p$ is a prime not dividing $N$ which ramifies in the splitting field $K$ of $f(c,x)$.  Then $p$ does not divide $(n-1)n$, for otherwise, $p$ would divide both $(n-1)n$ and  $D(f(c,x))$, hence also $N$, contrary to hypothesis.  In particular, every root of $f'(c,x)$ and of  $f''(c,x)$ is integral at $p$, as the leading coefficient of $f'(c,x)$ is $n$ and the leading coefficient of $f''(c,x)$ is $(n-1)n$.  Let $\fp$ be a prime of $K$ dividing $p$.
Let $\sigma$ be an element of the inertia group of $\fp$ of order greater than two.  Then for some root $\beta$ of $f(c,x)$, the orbit of $\beta$ under $\sigma$ has length at least three.    As $\sigma$ belongs to the inertia group of $\fp$, we have
$\beta\equiv \sigma(\beta)\equiv \sigma^2(\beta)$ mod $\fp.$  It follows that $f(c,x)$ factors mod $\fp$ as $(x-\beta)^3h(x)$ mod $\fp$.  This implies in particular that $D(f(c,x)$ and $D(f'(c,x)$ are divisible by $\fp$, and being both rational integers, they are divisible by $p$.  This contradicts the assumption that $p$ does not divide $N$.
  It follows that the exponent of every nontrivial inertia group at primes $p$ not dividing $N$ is $2$.
This, together with tame ramification of $K$ above, implies that all such nontrivial inertia groups are cyclic of order $2$.
 \vskip 0.5em
 2.  Let $\cP=\{a+bt:t \in \dZ \}$ be the given arithmetic progression.
 Consider the polynomial  $$g(t,x):=f(a+bt,x).$$
  Conditions (iii),(iv) hold for $g$ in place of $f$ as the substitution $t\mapsto a+bt$ defines an automorphism of  $\dQ(t)$.  By Hilbert's irreducibility theorem, there exist infinitely many specializations $t=c' \in \dZ$ for which $g(c',x)$ is irreducible with Galois group $G$ and the corresponding splitting fields are linearly disjoint over $\dQ$.  But $g(c',x)=f(a+bc',x)$, hence we have infinitely many integers $c=a+bc' \in \cP$ for which $f(c,x)$ is irreducible with Galois group $G$ and the corresponding splitting fields are linearly disjoint over $\dQ$, and in addition,
  for each such $c$, the corresponding Galois group has all inertia groups at primes not dividing $N$ of order at most two.
  \end{proof}

\begin{rem} \label{unramified} Given any realization $K/\dQ$ of $G$ with all inertia groups of order $2$ (as happens in Theorem \ref{thm:inertia} with $N=1$), there exists a quadratic extension $E/\dQ$ such that $G(KE/E)\cong G$ and $KE/E$ is unramified at all primes (finite and  infinite).\end{rem}
 \begin{proof}  Taking $E$ to be
  $\dQ(\sqrt{a})$ with $a$ negative, divisible by the product of the primes ramifying in $K$, and disjoint from $K$ over $\dQ$, Abhyankar's Lemma implies that $E$ swallows the ramification at all primes (including infinity) and $G(KE/E)\cong G$.
  \end{proof}
 \vskip 0.5em
\begin{rem} \label{exponent2_ab} Any group $G$ realizable over $\dQ$ with all nontrivial inertia groups of exponent $2$, 
is necessarily generated by elements of order $2$.
\end{rem}
\begin{proof} Suppose $K/\dQ$ is Galois with group $G$ and all  nontrivial inertia groups are of exponent $2$. The subgroup $H$ of $G$ generated by all these inertia groups has a fixed field  which is unramified at all finite primes,
 so $H=G$ by Minkowski's theorem.  
Furthermore, $H$ is clearly generated by elements of order $2$.
\end{proof}

\it Illustrative example. \rm The simplest nontrivial illustration of a group $G$ in Theorem \ref{thm:inertia} is the symmetric group $S_3$ of order $6$.  Let $f(t,x)=x^3+tx+t-1$.
 Specializing to $t=-1$ shows $f(-1,x)$ to be irreducible over $\dQ$, and factoring $f(-1,x)$ mod $5$ and mod $11$ yields Galois group $S_3$ for $f(-1,x)$ over $\dQ$, hence also for $f(t,x)$ over $\dQ(t)$.  Thus (i) holds. As for (ii), the discriminant $D(f(t,x))$ of $f(t,x)$ is $-4t^3-27(t-1)^2$.  If the splitting field of $f(t,x)$ over $\dQ(t)$ were not regular, then the Galois group over $\dC(t)$ would not be $S_3$, from which it would follow that the discriminant $-4t^3-27(t-1)^2$ is a square in $\dC(t)$.  However, the discriminant of $-4t^3-27(t-1)^2$ with respect to $t$ is $-629856\neq0$, so $-4t^3-27(t-1)^2$ factors over $\dC$ into the product of three distinct linear factors, hence is not a square in $\dC(t)$.  For (iii), we have $(3-1)3D({\underline f'(t,x)})=6\cdot(-12t)$, which is coprime to $-4t^3-27(t-1)^2$ in $\dZ[t]$.  Finally, taking $c=2$, we have $D(f(2,x))=59$, which is coprime to $6\cdot(-12\cdot 2)=-2^43^2$, verifying condition (iv).
 \vskip 0.5em

\section{$Examples$}\label{sec:examples} \rm
 In this section we give three examples of finite simple groups to which Theorem \ref{thm:inertia} is applied.
 \vskip 0.5em
 \begin{thm} \label{thm:A5exp2} The alternating group $A_5$ occurs infinitely
often as a Galois group over the rationals $\dQ$ with all nontrivial
inertia groups of order $2$.
\end{thm}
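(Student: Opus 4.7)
The plan is to apply Theorem \ref{thm:inertia} to an explicit degree-$5$ parametric polynomial $f(t,x)\in \dZ[t,x]$ whose Galois group over $\dQ(t)$ is $A_5$ and whose splitting field is regular over $\dQ$. Such polynomials are classical in the rigidity theory of $A_5$ (one can, for example, take a one-parameter family derived from an $A_5$-rigid triple via Belyi's construction, or a specialization of Brumer's generic quintic). Having fixed such an $f$, the remaining work is entirely a verification of hypotheses (i)--(iv) of the theorem.

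Concretely, I would proceed as follows. First, confirm that the chosen $f(t,x)$ is irreducible in $\dQ[t,x]$ (condition (iii)) and that its $\dQ(t)$-Galois group is $A_5$; for the latter, it suffices to specialize $t$ to a small integer, show via mod-$p$ factorizations (for two suitable primes giving disjoint cycle types in $A_5$, e.g.\ a $5$-cycle and a product of two disjoint transpositions, the latter only possible in $S_5\setminus A_5$, so one instead uses a $3$-cycle together with a $5$-cycle) that the specialized Galois group contains $A_5$, and check that $\mathrm{disc}_x(f)$ is a square in $\dZ[t]$ to rule out $S_5$. Regularity of the splitting field over $\dQ$ (condition (iv)) is then verified by the same argument used in the $S_3$ example of the paper: if regularity failed then $A_5$ would strictly contain the geometric Galois group, so some root of $\mathrm{disc}_x(f)\in \dZ[t]$ would yield a square root in $\dC(t)$ that is not present in $\dQ(t)$; this can be excluded by checking that $\mathrm{disc}_x(f)$, written as a square in $\dZ[t]$, has squarefree ``core'' nontrivially depending on $t$, or alternatively by exhibiting two integer specializations giving $A_5$ with non-conjugate decomposition groups at a common prime.

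Next, I would compute $N:=\gcd_{\dZ[t]}\bigl(D(f(t,x)),\,20\cdot D(\underline{f'(t,x)})\bigr)\in \dZ$ (condition (i)) and then locate one integer $c$ (by direct computation, trying $c=0,\pm 1,\pm 2,\ldots$) for which $\gcd\bigl(D(f(c,x)),\,20\cdot D(\underline{f'(c,x)})\bigr)$ is supported only on the prime divisors of $N$ (condition (ii)). Ideally one tunes the polynomial so that $N=1$; when $N=1$, part~1 of Theorem \ref{thm:inertia} immediately gives an arithmetic progression of specializations with all nontrivial inertia of order exactly $2$ (order $\leq 2$, and the inertia groups must be nontrivial at every ramified prime), and part~2 promotes this to infinitely many linearly disjoint $A_5$-realizations, finishing the proof.

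The main obstacle is the choice of the parametric polynomial. A generic $A_5$-model is likely to have $N$ divisible by the ``bad'' primes $2,3,5$ (which govern wild ramification and the leading-coefficient issues in $f'$), and if $N>1$ then the conclusion only gives order-two inertia away from the primes dividing $N$. To reach the stronger statement of the theorem with \emph{all} nontrivial inertia of order $2$, one would either search among different models of the $A_5$-cover (or perform an integral $t$-substitution $t\mapsto a+bt$) to arrange $N=1$, or, if this is not feasible, argue separately at the few bad primes that by Abhyankar-type manipulations the inertia can still be made of order $2$ after passing to the claimed arithmetic progression. Either way, the arithmetic verification of condition (ii) at a single specialization $c$ is the computational heart of the proof.
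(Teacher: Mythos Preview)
Your plan is the paper's approach: feed an explicit one-parameter $A_5$-quintic into Theorem~\ref{thm:inertia} and verify hypotheses (i)--(iv). The paper carries this out with the Hashimoto--Tsunogai generic $A_5$-quintic specialized at $u=0$, namely $f(0,v,x)=x^{5}-10x^{3}+vx^{2}+(25-3v)x+9v-24$; at $c=-3$ the integers $D(f(0,-3,x))$ and $20\,D(\underline{f'}(0,-3,x))$ turn out to be coprime, so in fact $N=1$ and the bad-prime workaround you anticipate is not needed.

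Two comments on the outline itself. First, as you acknowledge, the substance of this proof is precisely the choice of polynomial and the numerical verification that $N=1$; your proposal defers both, so what remains is essentially a restatement of how Theorem~\ref{thm:inertia} is meant to be used rather than a proof. Second, your regularity sketch is lifted from the paper's $S_3$ example and does not transfer to $A_5$: for an $A_5$-quintic the discriminant is \emph{already} a square in $\dQ(t)$, so looking for a square root that appears over $\dC(t)$ but not over $\dQ(t)$ yields nothing. The paper instead uses the simplicity of $A_5$ (non-regularity would force the entire splitting field into $\overline{\dQ}(v)$, making the field discriminant over $\dQ(v)$ constant) and then checks that $D(f)$ is not a constant times a square in $\dQ(v)$. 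Your alternative suggestion---exhibiting two integer specializations with distinct $A_5$ splitting fields---would also establish regularity and is arguably the cleanest route.
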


\begin{proof}

  Consider the generic quintic polynomial for $A_5$ \cite{HT}: $$f(u,v,x)=x^{5}+ux^{4}+(-6u-10)x^{3}+vx^{2}+(-u^{2}+12u+25-3v)x+9v-24+u^{3}+24u^{2}+27u$$  Let $f'(u,v,x)$ be the derivative of $f$ with respect to $x$.  Let ${\underline f'}(u,v,y)$ be defined by $5^3 f'={\underline f'}(u,v,5x)$.
Specialize $u$ to $0$ to obtain $$f(0,v,x)=x^{5}-10x^{3}+vx^{2}+(25-3v)x+9v-24$$
$$D(f)=-27v^{3}+6480v^{2}-28400v+24000$$
and $${\underline f'}(0,v,x)=x^{4}-150x^{2}+50vx+5^{5}-375v$$
$$D({\underline f'})=-2^{4}5^{6}(675v^{4}+396792v^{3}+1846800v^{2}-7920000v-20000000)$$
where $D(f)$ and $D({\underline f'})$ are the discriminants of $f(0,v,x)$ and ${\underline f'}(0,v,x)$ with respect to $x$.

 We now apply Theorem \ref{thm:inertia}, which implies that there are infinitely many specializations of $v\in\dZ$ which yield disjoint splitting fields of $f(0,v,x)$ with Galois group $A_5$ and such that all nontrivial inertia groups have exponent two, provided the following conditions hold:
\vskip 0.5em
(i) $f(0,v,x)$ is irreducible in $\dQ[v,x]$ with Galois group $A_5$ over $\dQ(v)$.  \vskip 0.5em
 (ii) The splitting field of $f(0,v,x)$ over $\dQ(v)$ is regular over $\dQ$.\vskip 0.5em
 (iii) $D(f(0,v,x))$ and $20D({\underline f'(0,v,x)})$ are coprime in
$\dZ[v]$. \vskip 0.5em
(iv) There exists $c \in \dZ$ such that $D(f(0,c,x))$ and $20D({\underline f'(0,c,x)})$ are coprime in
$\dZ$.\vskip 0.5em
We verify these conditions for $f(0,v,x)$:
\vskip 0.5em
(i) $f(0,v,x)$ is a specialization of a polynomial with Galois group $A_5$, so the Galois group over $\dQ(v)$ is a subgroup of $A_5$. For $v=0$ we obtain the polynomial $f(0,0,x)=x^{5}-10x^{3}+25x-24$ which has  Galois group $A_5$ over $\mathbb{Q}$. It follows that $f(0,v,x)$  has Galois group $A_5$ over $\dQ(v)$.  \vskip 0.5em
(ii) Let $K$ be the splitting field of $f(0,v,x)$ over $\dQ(v)$, and set $L=K\cap \widetilde{\dQ}(v)$. $L/\dQ(v)$ is Galois since $K$ and $\widetilde{\dQ}(v)$ are Galois over $\dQ(v)$. Moreover, $A_5=Gal(K/\dQ(v))$ is a simple group and thus we must have $L=\dQ(v)$ or $L=K$. It remains to show that $L\neq K$. If $L=K$ then $K\subseteq \widetilde{\dQ}(v)$ implying that the field discriminant of $K/\dQ(v)$ is a constant, which implies that the polynomial discriminant $D(f)$ of $f(0,v,x)$ is a constant times a square in $\dQ(v)$.  However $D(f)=-27v^{3}+6480v^{2}-28400v+24000$ is evidently not such.  It follows that $L=\dQ(v)$.
\vskip 0.5em
(iii) $D(f(0,v,x))$ and $20D({\underline f'(0,v,x)})$ are irreducible and thus coprime as polynomials. Moreover, the GCD of the coefficients of $D(f(0,v,x))$ is 1 and thus the polynomials are coprime in
$\dZ[v]$. \vskip 0.5em
(iv) For $c=-3$ we obtain $D(f(0,-3,x))=3^{2}\cdot8243^{2}$ and $20D({\underline f'(0,-3,x)})=-2^{4}\cdot 5^{6}\cdot 23^{2}\cdot 18379$.
\end{proof}
\vskip 0.5em
\vskip 0.5em

\begin{thm}
\label{exp2psl27}
$PSL_2(7)$ occurs infinitely
often as a Galois group over the rationals $\dQ$ with all nontrivial
inertia groups of order $2$. Moreover, these $PSL_2(7)$-extensions can be chosen to be tamely ramified (i.e., unramified at $2$) and contained in $\rr$.
\end{thm}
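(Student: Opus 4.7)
The plan is to mirror the strategy used for $A_5$ in Theorem 3.1 applied to an explicit parametric polynomial $f(t,x)\in\dZ[t,x]$ of degree $n=7$ with Galois group $PSL_2(7)$ over $\dQ(t)$, taken in the natural action on the $7$ points of $\mathbb{P}^1(\F_7)$. Such a parametric polynomial is available in the literature (for instance from the Malle--Matzat tables, or from LaMacchia's construction, or from the family used by the first author in \cite{jk}); after possibly specializing a second parameter to cut down to a one-parameter family, the verification splits into the same four items that appeared for $A_5$.

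First I would check: (i) irreducibility of $f(t,x)$ over $\dQ(t)$ with Galois group $PSL_2(7)$, by specializing $t$ to a convenient integer and reading off a cycle structure (via factorizations modulo small primes) that forces the full group---since $f$ is already known to lie in a $PSL_2(7)$-family, its Galois group is a subgroup of $PSL_2(7)$ and a single witness of a $7$-cycle together with another suitable cycle suffices; (ii) regularity over $\dQ$, by the simple-group argument used for $A_5$: since $PSL_2(7)$ is simple, the intersection $K\cap\widetilde{\dQ}(t)$ equals either $\dQ(t)$ or the whole splitting field $K$, and the latter would force $D(f(t,x))$ to be a constant times a square in $\dQ(t)$, which is ruled out by inspection of the explicit discriminant; (iii) coprimality of $D(f(t,x))$ and $42\,D(\underline{f'}(t,x))$ as elements of $\dZ[t]$; (iv) existence of a concrete $c\in\dZ$ for which the two specialized integers share no common prime divisor. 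Together (iii) and (iv) give $N=1$, and Theorem \ref{thm:inertia} then yields infinitely many integer specializations $c$ with $\Gal(f(c,x)/\dQ)=PSL_2(7)$, pairwise linearly disjoint splitting fields, and all nontrivial inertia groups of order $\leq 2$.

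Tame ramification is then automatic. Since $N=1$ and $2\mid (n-1)n=42$, no prime (in particular not $2$) can divide both $D(f(c,x))$ and $42\,D(\underline{f'}(c,x))$, so $2\nmid D(f(c,x))$ and $2$ is unramified in the splitting field. To force the splitting field into $\rr$, I would additionally exhibit a single $c_0\in\dQ$ for which $f(c_0,x)$ has seven distinct real roots; continuity of the roots in the coefficients then produces an open interval $I\subset\rr$ of values of $c$ for which $f(c,x)$ is totally real. Because Hilbert's irreducibility theorem provides infinitely many good specializations inside any arithmetic progression, and any arithmetic progression meets $I\cap\dZ$ in infinitely many elements, one obtains infinitely many splitting fields that are simultaneously totally real, tamely ramified, pairwise disjoint, and $PSL_2(7)$-extensions of $\dQ$ with every nontrivial inertia group of order exactly $2$.

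The main obstacle I anticipate is the concrete choice of $f(t,x)$. Condition (iii) is sensitive to the explicit form of the parametric polynomial, and for standard choices one may have to pass to a sub-family (analogous to $u=0$ in the $A_5$ argument), or apply an affine substitution $t\mapsto a+bt$ as in part 2 of Theorem \ref{thm:inertia}, to clear out spurious common factors between the two discriminants. Arranging a one-parameter sub-family that simultaneously satisfies the coprimality hypothesis and admits a totally real specialization is the delicate step; once both are in place, the remainder is a routine discriminant computation and an invocation of Theorem \ref{thm:inertia}.
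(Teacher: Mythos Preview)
Your overall plan mirrors the paper's: apply Theorem~\ref{thm:inertia} to an explicit one-parameter $PSL_2(7)$ polynomial and check conditions (i)--(iv). The paper does exactly this with Malle's four-parameter family specialized to $(a,b,c)=(1,5,1)$, verifying coprimality of the two discriminants (so $N=1$) at $t_0=0$. Your observation that $N=1$ automatically forces $2\nmid D(f(c,x))$ (since $2\mid 42$) is correct and is in fact already built into the proof of Theorem~\ref{thm:inertia}; the paper redundantly re-verifies unramifiedness at $2$ by checking separability of $f$ mod~$2$.

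There is, however, a genuine gap in your argument for containment in $\rr$. Continuity of roots around a single totally real specialization $c_0$ only yields a \emph{bounded} open interval $I$, and then your claim that ``any arithmetic progression meets $I\cap\dZ$ in infinitely many elements'' is simply false: a bounded interval contains only finitely many integers. To make this step work you need the totally real locus to contain an \emph{unbounded} ray. The paper secures this differently: it notes that the number of real roots of $f(t_0,x)$ can change only when $t_0$ passes through a real branch point of the cover, of which there are finitely many; hence it suffices to check one value $t_0$ below all real branch points, after which \emph{every} sufficiently negative $t_0$ gives a totally real splitting field. Since the arithmetic progression produced by Theorem~\ref{thm:inertia} is unbounded below, it contains infinitely many such $t_0$, and Hilbert irreducibility then finishes. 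Your proposal can be repaired by replacing the local continuity argument with this branch-point argument (or by directly verifying that $f(t_0,x)$ has seven real roots for all $t_0\ll 0$).

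A secondary point: as written your proposal is a plan rather than a proof, since no polynomial is exhibited and the discriminant computations are not carried out. The paper's contribution here is precisely the explicit choice and the numerical verification.
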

As in Remark \ref{unramified}, we obtain:
\begin{cor}
There exist infinitely many $PSL_2(7)$-Galois extensions $F|\qq$ such that, for some real quadratic number field $k$, the extension $Fk|k$ is an everywhere unramified $PSL_2(7)$-extension. \rm (Here the term ``everywhere unramified" includes the infinite primes of $k$).
\end{cor}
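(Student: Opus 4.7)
The plan is to combine Theorem \ref{exp2psl27} with the Abhyankar's lemma argument already sketched in Remark \ref{unramified}, exploiting the fact that the $PSL_2(7)$-realizations supplied by the theorem are real, so the auxiliary quadratic field can be chosen real.

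First, I would invoke Theorem \ref{exp2psl27} to produce infinitely many pairwise distinct Galois extensions $F/\qq$ with $\Gal(F/\qq)\cong PSL_2(7)$, all nontrivial inertia groups of order $2$, tamely ramified (in particular unramified at $2$), and contained in $\rr$. Fix such an $F$ and list its (necessarily odd) ramified primes $p_1,\dots,p_r$. I would then take a squarefree \emph{positive} integer $a$ divisible by $p_1\cdots p_r$ and set $k:=\qq(\sqrt{a})$; positivity of $a$ ensures $k$ is real quadratic. Since $PSL_2(7)$ is simple and $[k:\qq]=2<|PSL_2(7)|$, the intersection $F\cap k$ is forced to be $\qq$, giving $\Gal(Fk/k)\cong PSL_2(7)$.

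Next, I would verify that $Fk/k$ is unramified at every place by a three-case analysis. At the archimedean places of $k$: since $F,k\subset\rr$ we have $Fk\subset\rr$, so each real place of $k$ extends to a real place of $Fk$ and is therefore unramified. At the odd primes $p_i$ ramifying in $F$, both $F/\qq$ and $k/\qq$ are tamely ramified with ramification index exactly $2$, so Abhyankar's lemma yields that $Fk/k$ is unramified above every $p_i$, just as in Remark \ref{unramified}. At any remaining rational prime $p$ (in particular $p=2$, which may ramify in $k$ when $a\equiv 2,3\pmod 4$), the local extension $F_p/\qq_p$ is unramified; base-changing an unramified local extension to any finite extension of $\qq_p$ preserves unramifiedness, so $F_pk_{\mathfrak q}/k_{\mathfrak q}$ is unramified for every prime $\mathfrak q$ of $k$ above $p$. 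Letting $F$ range over the infinite family provided by Theorem \ref{exp2psl27} produces the infinitely many extensions required.

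I do not foresee any genuine obstacle. The one delicate point is the sign of $a$: in Remark \ref{unramified} one takes $a<0$ precisely to swallow a possibly ramified infinite place of the realization, whereas here the realness guaranteed by Theorem \ref{exp2psl27} means no infinite ramification needs to be absorbed, so we are free to take $a>0$ and thereby force $k$ to be real. This observation is what elevates Remark \ref{unramified} from producing an arbitrary quadratic $k$ to producing a real one.
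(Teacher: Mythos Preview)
Your proposal is correct and follows essentially the same approach as the paper: the paper simply points to Remark \ref{unramified} (and, in a suppressed proof, takes $k=\qq(\sqrt{\prod_{p\in S}p})$ with $S$ the set of ramified primes, invokes Abhyankar's lemma for the finite primes, uses $Fk\subset\rr$ for the infinite primes, and uses simplicity of $PSL_2(7)$ for linear disjointness). Your three-case verification is slightly more explicit---in particular you spell out why primes above $2$ cause no trouble even if $k$ ramifies there---but the argument is the same.
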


\begin{proof}[Proof of Theorem \ref{exp2psl27}:]
The following $4$-parameter family was given by Malle in \cite{Malle}:

$f:=x^7 - ((c - 2)a + 2b + c)x^6 + (-(b - 4)(c - 1)a^2 + ((c - 2)b^2
+(2c^2 - 5c + 4)b - 2c^2)a + b(2bc + 2c^2 + b^2 ))x^4
+((2c^2 - 1)(b - 4)a^2 + ((-2c^2 + c + 2)b^2 + (5c^2 + 2c - 4)b - 4c^2)a
-(c + 1)b^3 - c(2c + 3)b^2 + c^2b)x^3 + ((c^2 + 3c - 1)(4 - b) a^2
+((3c - 2)b^2 - 2(c^2 + 4c - 2)b + 4c^2 )a + b(b^2 + 3bc - c^2 ))cx^2
+(2abc - 8ac + ab - 4a - b^2 + 2bc)ac^2x - a^2(b - 4)c^3
+tx^2(x - c)(x^2 - bx + b)$.

It was shown there that $f$ has Galois group $PSL_2(7)$ over $\qq(a,b,c,t)$.

Note that as a family of regular Galois extensions over $\qq(t)$, the branch cycle structure is generically (i.e.\ for a dense open subset of all parameter choices for $a,b,c$) given by six involutions in $PSL_2(7)$.

After specialization $c\mapsto 1$, the discriminant of $f$ factors over $\zz$ as
$a^2\cdot g(t)^2$ for some irreducible polynomial $g\in \zz[a,b][t]$ of degree $5$ in $t$.

Similarly, the normalized discriminant $\Delta(\underline{f'}(t,x))$ factors as
$2^6\cdot 7^{20}\cdot h(t)$ for some irreducible polynomial $h\in \zz[a,b][t]$ of degree $9$.

This gives us many possible choices for specializations of $a$ and $b$ such that Theorem \ref{thm:inertia} can be applied. For simplicity, we restrict ourselves to one good choice of parameter values. So let $a:=1$ and $b:=5$ (and $c:=1$ as above). From now on, refer to this specialized polynomial by $f (=f(t,x))$. It is easy to verify that the polynomial $f$ still has Galois group $PSL_2(7)$ over $\qq(t)$.

To be explicit, we have
$$f(t,x)=x^7 - 10x^6 + 163x^4 - 333x^3 + 191x^2 - 12x - 1+ x^2(x-1)(x^2-5x+5)t. $$
Also, the discriminants $\Delta(f(t,x))$ and $7\cdot 6 \cdot \Delta(\underline{f'}(t,x))$ are coprime and remain so for at least one integer specialization $t\mapsto t_0$ (one may just choose $t_0=0$, which leaves the constant coefficients, factoring as $$23^2 \cdot 254106319^2$$ and
$$2^7\cdot 3 \cdot 7^{20}\cdot 1213\cdot 20789\cdot 208589 \cdot 592191293$$ respectively).

Furthermore, one verifies easily that the polynomial $f$ remains separable modulo $2$ for all integer specializations $t\mapsto t_0\in \zz$ (in fact, even irreducible). This means that the corresponding number field extensions of $\qq$ are unramified at $2$. Theorem \ref{thm:inertia} therefore yields an arithmetic progression of integers $t_0$ such that the splitting field of $f(t_0,x)$ over $\qq$ is (unramified at $2$ and) all inertia subgroups at ramified primes are of order $2$.
By Hilbert's irreducibility theorem, such an arithmetic progression even yields infinitely many linearly disjoint extensions, all with group $PSL_2(7)$ over $\qq$.

Moreover, for $t\mapsto t_0 \in \zz$ sufficiently small (i.e.\ negative of sufficiently large absolute value), the splitting field of $f(t_0,x)$ is contained in $\rr$. It is sufficient to verify this for one sufficiently small value and to note that the number of real roots of $f(t_0,x)$ can only change when one passes through a real branch point of the splitting field of $f$ over $\qq(t)$. But the specializations $t\mapsto t_0\in \zz$ from which we obtained our $PSL_2(7)$-extensions above form an arithmetic progression. Therefore, they include infinitely many $t_0$ which are sufficiently small to enforce the extension to be contained in $\rr$.
\end{proof}


 We treat a further non-abelian simple group and demonstrate how constant common divisors of $D(f(t,x))$ and $(n-1)nD({\underline f'(t,x)})$ in Theorem \ref{thm:inertia} may be dealt with in concrete examples.

We first give a new polynomial for the Galois group $PSL_3(3)$.
\begin{lem}
\label{lem:psl33_pol}
Let {\footnotesize $$f(t,x):=x(x^3 + 3x^2 - 27)(x^9 - 54x^6 - 81x^5 + 108x^4 + 837x^3 - 972x - 432)$$
$$-t(4x^{12} + 8x^{11} - 101x^{10} - 222x^9 + 428x^8 + 1970x^7 - 1020x^6 - 3240x^5
    - 1088x^4 - 672x^3 + 7776x + 5184).$$}
Then $f$ has regular Galois group $PSL_3(3)$ over $\qq(t)$.
\end{lem}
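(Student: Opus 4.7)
The plan is to pin down $G:=\mathrm{Gal}(f/\qq(t))$ by sandwiching it between two copies of $PSL_3(3)$ in $S_{13}$: a lower bound from one well-chosen rational specialization, and an upper bound coming from the classification of primitive subgroups of $S_{13}$ together with a resolvent computation. First I would verify that $f(t,x)$ is separable and irreducible in $\qq(t)[x]$; since $\deg_x f=13$ is prime, irreducibility over $\qq(t)$ is equivalent to transitivity of $G$ on the roots, and it is enough to exhibit one integer specialization $t_0$ for which $f(t_0,x)$ has an irreducible factor modulo some small prime of good reduction.

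For the lower bound I would pick a convenient $t_0\in\zz$ and use a computer algebra system to certify that $f(t_0,x)$ has Galois group $PSL_3(3)$ over $\qq$; the standard specialization inequality then yields $G\supseteq PSL_3(3)$. For the upper bound I would use that a transitive, hence primitive, subgroup of $S_{13}$ is one of the Frobenius groups $C_{13}, D_{13}, F_{39}, F_{78}, F_{156}$, or else $PSL_3(3)$, $A_{13}$, or $S_{13}$; the five Frobenius groups are already excluded by the lower bound. Verifying that the discriminant of $f$ with respect to $x$ is a square in $\qq(t)$ rules out $S_{13}$ (since $PSL_3(3)\le A_{13}$), and to separate $PSL_3(3)$ from $A_{13}$ I would compute an explicit resolvent adapted to the action of $PSL_3(3)$ on the $13$ lines of $\mathbb{P}^2(\ff_3)$ and observe a rational factor of the expected degree in $\qq(t)[x]$, which is incompatible with $G=A_{13}$.

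Regularity is then immediate: $G':=\mathrm{Gal}(f/\overline{\qq}(t))$ is normal in $G=PSL_3(3)$, which is simple, so $G'$ equals either $1$ or $G$; the former would contradict the nontrivial $PSL_3(3)$-specialization from the lower bound step, leaving $G'=G$. The main obstacle I anticipate is the resolvent step used to separate $PSL_3(3)$ from $A_{13}$: cycle-type data coming from reductions modulo primes can never provide a proof of this separation, so either a clean invariant-theoretic argument exploiting the projective-plane structure, or a direct computer-algebra verification of the generic Galois group — nontrivial for a degree-$13$ polynomial in two variables — will be required.
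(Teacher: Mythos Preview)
Your plan is sound and rests on the same structural fact that the paper exploits: $PSL_3(3)$, unlike $A_{13}$ or $S_{13}$, has two non-conjugate index-$13$ subgroups (the stabilizers of a point and of a line in $\mathbb{P}^2(\ff_3)$), so exhibiting a second degree-$13$ action separates it from the alternating group. The execution, however, differs in both halves. For the lower bound the paper avoids any black-box CAS Galois certification: it simply notes that $f(0,x)$ factors into irreducibles of degrees $1$, $3$ and $9$, so $9\mid |G|$, which already excludes all the Frobenius subgroups of $C_{13}\rtimes C_{12}$. For the upper bound, rather than computing a resolvent for the line action (which a priori sits inside a degree-$\binom{13}{4}=715$ polynomial over $\qq(t)$), the paper writes down the dual degree-$13$ polynomial $g(t,Y)=g_0(Y)-t\,g_1(Y)$ explicitly and checks in $\qq[x,Y]$ that $f_0(x)g_1(Y)-f_1(x)g_0(Y)$ factors with $x$-degrees $4$ and $9$; hence over a root field of $g$ the polynomial $f$ splits as $4+9$ without acquiring a root, producing an index-$13$ subgroup of $G$ with no fixed point and forcing $G=PSL_3(3)$. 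Your sandwich argument is conceptually clean and your discriminant-square check is a pleasant bonus, but it outsources the hard step (distinguishing $PSL_3(3)$ from $A_{13}$) to a CAS or to a large resolvent; the paper trades that for a single bivariate factorization, at the cost of having to supply $g_0,g_1$ from an outside source (a genus-zero parametrization explained just after the lemma). Regularity is handled identically in both approaches, via simplicity of $PSL_3(3)$.
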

\begin{proof}
Set $G:={\rm{Gal}}(f|\qq(t))$.
From the factorization of $f(0,x)$ into irreducibles of degree $1$, $3$ and $9$, it follows that $|G|$ is divisible by $9$. This shows that $G$ is not a subgroup of $C_{13}\rtimes C_{12}$, which already leaves only three transitive groups of degree $13$, namely $PSL_3(3)$, $A_{13}$ and $S_{13}$. To show that $G$ is none of the latter two, the standard strategy (used for similar Galois group verifications e.g.\ in \cite{Malle} or \cite{Koe17}) is to use the fact that $PSL_3(3)$, unlike $S_{13}$ or $A_{13}$, has two non-conjugate subgroups of index $13$ (the stabilizers of a line and of a plane in $(\mathbb{F}_3)^3$).\\
Denote by $f_0(x)$ and $f_1(x)$ the coefficients of $f$ at the monomials $t^0$ and $t^1$ respectively.
Set \\
$g_0(Y):=27(Y^3 + 54Y^2 + 243Y - 4374)(Y^9 + 135Y^8 + 7776Y^7 + 267543Y^6 + 6613488Y^5 + 126128664Y^4 + 1734623424Y^3 + 14922863280Y^2 + 66119763456Y + 123974556480)$,\\
$g_1(Y):=(Y^4 + 54Y^3 + 972Y^2 + 18954Y + 104976)(Y^9 + 122Y^8 + 6264Y^7 + 205092Y^6 + 5385852Y^5 + 104188680Y^4 + 1218062772Y^3 + 8804914488Y^2 + 60533255664Y + 315446371488)$.

One verifies, e.g.\ with Magma, that the polynomial $f_0(x)g_1(Y)-f_1(x)g_0(Y)\in \qq[x,Y]$ is reducible, with factors of degree $4$ and $9$ in $x$. In other words, $f(t,x)=f_0(x)-tf_1(x)$ splits into factors of degree $4$ and $9$ over a root field of $g_0(Y)-tg_1(Y)$. Thus, there exists a degree-$13$ extension of $\qq(t)$ (necessarily contained in a splitting field of $f(t,x)$) over which $f$ is reducible, but does not have a root. This shows the existence of an index-$13$ subgroup in $G$, not fixing a point. Therefore, $G=PSL_3(3)$.\\
The regularity in the assertion follows immediately, since $PSL_3(3)$ is simple.
\end{proof}

The above polynomial $f(t,x)$ was obtained in the following way: In \cite[Theorem 7.2]{Koe17}, a multi-parameter polynomial $f(\alpha,\beta,t,x)$ of degree 13 in $x$ and with Galois group $PSL_3(3)$ was given. To obtain our polynomial, we specialized $\beta:=1$, $t:=-1$, and then made use of the remark after Theorem 7.2 in \cite{Koe17} asserting that the branch cycle structure of $f(\alpha,\beta,t,x)$ with respect to $\alpha$ consists of six involutions in $PSL_3(3)$, all of cycle type $(2^4.1^5)$. In particular, it then follows from the Riemann-Hurwitz genus formula that a root field of $f(\alpha,1,-1,x)$ over $\qq(\alpha)$ is of genus zero, and is in fact a rational function field (the last implication follows since the degree of the extension is odd). A standard Riemann-Roch space computation (using Magma) then yielded a parameter $\xi$ for this genus zero function field, whose minimal polynomial (after renaming $\alpha$ to $t$ and performing some linear transformations) is exactly the polynomial $f(t,x)$ above.

\begin{thm}
\label{thm:psl33}
$PSL_3(3)$ occurs infinitely
often as a Galois group over the rationals $\dQ$ with all nontrivial
inertia groups of order $2$.
\end{thm}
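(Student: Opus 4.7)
The plan is to apply Theorem \ref{thm:inertia} to the polynomial $f(t,x)$ of degree $n=13$ in $x$ constructed in Lemma \ref{lem:psl33_pol}, whose regular Galois group over $\qq(t)$ is $PSL_3(3)$.

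First, using a computer algebra system I would compute $D(f(t,x))$ and $(n-1)n\,D(\underline{f'}(t,x)) = 156\cdot D(\underline{f'}(t,x))$ explicitly as elements of $\zz[t]$ and verify that their gcd in $\zz[t]$ is a constant $N$. Since $156 = 2^2\cdot 3\cdot 13$, the prime divisors of $N$ lie in $\{2,3,13\}\cup \Sigma$, where $\Sigma$ is the finite set of primes arising as common constant factors of the two discriminants. I would then exhibit one explicit integer $c_0$ for which $\gcd\!\bigl(D(f(c_0,x)),\,156\cdot D(\underline{f'}(c_0,x))\bigr)$ is divisible only by primes of $N$, verifying hypotheses (i) and (ii) of Theorem \ref{thm:inertia}. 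Hypotheses (iii) and (iv) are immediate consequences of Lemma \ref{lem:psl33_pol}, using that $PSL_3(3)$ is non-abelian simple so that the splitting field is automatically regular. Theorem \ref{thm:inertia} then furnishes an arithmetic progression $\cP\subset \zz$ together with infinitely many $c\in\cP$ for which $f(c,x)$ defines a $PSL_3(3)$-Galois extension of $\qq$, the corresponding splitting fields are linearly disjoint, and every inertia group at a prime not dividing $N$ has order $\le 2$.

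The main obstacle, explicitly flagged in the paragraph preceding the theorem, is controlling the ramification at the primes dividing $N$. For each such prime $p$ I would study the reduction $\bar f(t,x)\in \F_p[t,x]$ and, by a finite computer-assisted search, locate a residue class $t\equiv r_p\pmod{p^{e_p}}$ (with $e_p$ a small positive integer, typically $e_p=1$) such that $\bar f(c,x)\in \F_p[x]$ is separable for every $c\equiv r_p\pmod{p^{e_p}}$. Separability of $\bar f(c,x)$ implies that $p$ is unramified in the splitting field of $f(c,x)$ over $\qq$, so the order-two-inertia condition becomes vacuous at $p$. The delicate case is $p=13$, where wild ramification is a priori possible because $p$ divides $|PSL_3(3)|$; here I would verify by direct analysis of $\bar f(r_{13},x)$ that the chosen residue class rules out wild ramification, which is the step that might fail a priori and has to be confirmed by explicit computation.

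Finally, by the Chinese remainder theorem I would intersect $\cP$ with the finitely many congruence conditions $t\equiv r_p\pmod{p^{e_p}}$, $p\mid N$, to obtain a single arithmetic sub-progression $\cP'\subseteq \cP$ along which every prime dividing $N$ is unramified in the splitting field, while the remaining ramified primes have inertia of order $2$ by part 1 of Theorem \ref{thm:inertia}. Applying Hilbert irreducibility on $\cP'$ exactly as in the proof of part 2 of Theorem \ref{thm:inertia} then yields infinitely many specializations $c\in \cP'$ for which $f(c,x)$ is irreducible over $\qq$ with Galois group $PSL_3(3)$, the corresponding splitting fields are linearly disjoint, and all nontrivial inertia groups have order $2$, as required.
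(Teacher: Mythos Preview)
Your overall architecture matches the paper's: apply Theorem \ref{thm:inertia} to the polynomial of Lemma \ref{lem:psl33_pol}, isolate the finite set of ``bad'' primes dividing $N$, impose extra congruences on $t$ to kill ramification at those primes, and combine via the Chinese remainder theorem. However, there is a genuine gap in the step where you handle the bad primes.

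The actual computation gives
\[
D(f(t,x))=2^{18}\cdot 3^{12}\cdot 12491^{6}\cdot(36t^{2}-40t-27)^{4}\cdot(31171328t^{4}-\cdots)^{4},
\]
so the common prime divisors of $D(f(t,x))$ and $156\,D(\underline{f'}(t,x))$ are $2$, $3$ and $12491$ (in particular $13\nmid N$, so your worry about wild ramification at $13$ is misplaced). The key point is that the constant $2^{18}\cdot 3^{12}\cdot 12491^{6}$ divides $D(f(c,x))$ for \emph{every} $c\in\zz$. Hence $f(c,x)\bmod p$ is \emph{never} separable for $p\in\{2,3,12491\}$, and your proposed search for residue classes $t\equiv r_p\pmod{p^{e_p}}$ with $\bar f(c,x)$ separable in $\F_p[x]$ is guaranteed to fail. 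Separability of the polynomial modulo $p$ is strictly stronger than unramifiedness of the splitting field at $p$, and for this particular $f$ only the latter can hold.

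The paper circumvents this by working with the \emph{field} discriminant rather than the polynomial discriminant: one checks that for $t=1$ a root field of $f(1,x)$ has discriminant $(23\cdot 31\cdot 109\cdot 23843)^{4}$, hence the splitting field is unramified at $2$, $3$ and $12491$. Krasner's lemma then guarantees that the same holds for all $t_0$ in some arithmetic progression $1+M\zz$ with $M$ supported on $\{2,3,12491\}$; since the progression coming from Theorem \ref{thm:inertia} has modulus coprime to $M$, CRT finishes the argument. Your proof becomes correct if you replace the mod-$p$ separability test by this field-discriminant-plus-Krasner step.
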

\begin{proof}
Consider the polynomial $f(t,x)$ from Lemma \ref{lem:psl33_pol}.
One computes that the discriminant of $f(t,x)$ equals
$D(f(t,x))=2^{18}\cdot 3^{12}\cdot 12491^6\cdot (36t^2 - 40t - 27)^4(31171328t^4 - 8088768t^3 - 279653877t^2 + 125341344t + 48892572)^4$, the common prime divisors with $(n-1)nD({\underline f'(t,x)})$ are only the constants $2$, $3$ and $12491$, and this remains true upon e.g.\ evaluation $t\mapsto 0$. It then follows from Theorem \ref{thm:inertia} that there exists some arithmetic progression $a+R\zz$, with $R\in \nn$ coprime to $2,3$ and $12491$, such that in a splitting field $f(a+Rz,x)$, all inertia groups at primes $p\notin\{2,3,12491\}$ have order $\le 2$ (for all $z\in \zz$).

On the other hand, there are many specialization values $t\mapsto b\in \zz$ such that a splitting field of $f(b,x)$ has discriminant coprime to all of $2,3$ and 12491. E.g., $b=1$ yields field discriminant $(23\cdot 31\cdot 109\cdot 23843)^4$ for a root field. Krasner's lemma then implies that for all $t_0\in \zz$ sufficiently close $p$-adically to $1$ for all $p\in \{2,3,12491\}$ (i.e.\ $t_0$ in some arithmetic progression $1+M\zz$, with $M$ only divisible by the three primes $2,3$ and $12491$), the splitting field of $f(t_0,x)$ will be unramified at these three primes as well.

The Chinese remainder theorem then yields an arithmetic progression of integer values $t_0$ such that in a splitting field of $f(t_0,x)$, all inertia groups have order $\le 2$ (and in fact always order $1$ for the three primes $2,3,12491$). Theorem \ref{thm:inertia} now gives the desired result.
\end{proof}				
\section{Optimally intersective  realizations}\label{sec:intersective} \rm
\def\cprime{$'$}
In this section we show how Theorem \ref{thm:inertia} yields the existence of infinitely many optimally intersective realizations of $A_5$ and $PSL_3(3)$ over $\dQ$.  This is already known for $PSL_2(7)$ \cite{jk}.
\vskip 0.5em
A monic polynomial in one variable with rational integer coefficients is called {\it intersective} if it has a root modulo $m$ for all positive integers $m$, and {\it nontrivially intersective} if in addition it has no rational root.  In this paper ``intersective"   means ``nontrivially intersective".  Let $G$ be a finite noncyclic group and let $r(G)$ be the smallest number of irreducible factors of an intersective
 polynomial with Galois group $G$ over $\dQ$.
There is a group-theoretically defined lower bound for $r(G)$, given by the
 smallest number $s(G)$ of proper subgroups of $G$ having the property that the union of the
 conjugates of those subgroups is $G$ and their intersection is trivial.  This follows from
 \vskip 0.5em
 \begin{prop} \label{thm:char} {\em (\cite [Prop. 2.1]{so})}
Let $K/\dQ$ be a finite
Galois extension with noncyclic Galois group $G$.  The following are
equivalent:

(1) \ \ $K$ is the splitting field of a product $f=g_1
\cdots g_m$ of $m$ irreducible polynomials of degree greater than
$1$ in $\dQ[x]$ and $f$ has a root in $\dQ_p$ for all (finite) primes $p$.

(2) \ \  $G$ is the union of the conjugates of $m$ proper
subgroups $H_1,...,H_m$, the intersection of all these conjugates
is trivial, and for all (finite) primes $\fp$ of $K$, the decomposition
group $G(\fp)$ is contained in a conjugate of some $H_i$.
\end{prop}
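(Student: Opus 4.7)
The plan is to set up, via the Galois correspondence, a dictionary translating the data in (1) to that of (2) and back. The central point is: an irreducible factor $g_i$ of degree $>1$ corresponds to a proper subgroup $H_i\le G$ obtained by choosing a root $\alpha_i\in K$ of $g_i$ and taking $H_i:=\Gal(K/\dQ(\alpha_i))$; the roots of $g_i$ in $K$ are then in bijection with the coset space $G/H_i$; and a root $g(\alpha_i)$ lies in $\dQ_p$ (upon embedding $K\hookrightarrow K_\fp$ for some $\fp\mid p$) precisely when that coset is fixed by the decomposition group $G(\fp)$, equivalently $G(\fp)\subseteq gH_ig^{-1}$.

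For (1) $\Rightarrow$ (2), I would take the $H_i$ as above; properness follows from $\deg g_i > 1$. That $K$ is the splitting field of $f$ over $\dQ$ means $K$ is generated by the union of the $G$-orbits of all $\alpha_i$, which via the Galois correspondence is exactly the condition $\bigcap_i \bigcap_{g\in G} gH_ig^{-1} = 1$. The local root hypothesis ``$f$ has a root in $\dQ_p$ for every $p$'' translates directly, via the dictionary, to ``$G(\fp)$ is contained in a conjugate of some $H_i$, for every $\fp$.'' To conclude that $G$ is covered by conjugates of $H_1,\dots,H_m$, I would invoke Chebotarev's density theorem: every $\sigma\in G$ arises as a Frobenius at some prime $\fp$ unramified in $K$, with $\langle\sigma\rangle = G(\fp)$, forcing $\sigma$ into a conjugate of some $H_i$.

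For (2) $\Rightarrow$ (1), I would reverse this construction: set $L_i:=K^{H_i}$, choose a primitive element $\beta_i$ with $L_i=\dQ(\beta_i)$, and let $g_i\in\dQ[x]$ be its minimal polynomial; this is irreducible of degree $[G:H_i]>1$. The splitting field of $g_i$ over $\dQ$ corresponds to the normal core $\bigcap_{g\in G}gH_ig^{-1}$, so the splitting field of $f:=g_1\cdots g_m$ corresponds to $\bigcap_i\bigcap_g gH_ig^{-1}$, which is trivial by hypothesis, giving splitting field $K$. For the local condition, $G(\fp)\subseteq gH_ig^{-1}$ means $G(\fp)$ fixes the root $g(\beta_i)\in K$ of $g_i$, which upon completion at $\fp$ shows $g_i$ has a root in $\dQ_p$, hence so does $f$.

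The main obstacle is the Chebotarev step in the forward direction: this is the one place where a genuine arithmetic (rather than formal algebraic) input is needed, in order to pass from the local-at-every-prime hypothesis to the group-theoretic covering condition. Everything else is the Galois correspondence combined with the standard identification of decomposition groups with local Galois groups of completions.
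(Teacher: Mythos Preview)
The paper does not supply a proof of this proposition; it is simply quoted from \cite[Prop.~2.1]{so} and used as a black box. So there is no in-paper argument to compare against.

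That said, your proof is correct and is essentially the standard one. The dictionary you set up is exactly right: irreducible factors $g_i$ of degree $>1$ correspond via Galois theory to proper subgroups $H_i$; the splitting-field condition on $f$ matches the trivial-intersection-of-conjugates condition; and the existence of a $\dQ_p$-root of $f$ is equivalent, after fixing an embedding $K\hookrightarrow K_\fp$, to $G(\fp)$ lying in a conjugate of some $H_i$. The Chebotarev step is indeed the only genuinely arithmetic input, and it is needed precisely where you place it---to obtain the covering $G=\bigcup_{i,g} gH_ig^{-1}$ in the forward direction. (One may also note, as a sanity check, that in the backward direction the covering hypothesis is not used separately: it is actually implied by the decomposition-group hypothesis together with Chebotarev, so its presence in (2) is for emphasis rather than logical necessity.)

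One small point worth making explicit in a write-up: when you say a root of $g_i$ ``lies in $\dQ_p$'', you are using that $g_i$ already splits completely in $K\subseteq K_\fp$, so any root of $g_i$ in $\overline{\dQ_p}$ is one of the roots in $K$, and lying in $\dQ_p$ is then exactly being fixed by $\Gal(K_\fp/\dQ_p)\cong G(\fp)$. You clearly have this in mind, but spelling it out removes any ambiguity about what ``has a root in $\dQ_p$'' means relative to the chosen embedding.
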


\vskip .5em
We call an intersective polynomial $f(x)$ with Galois group $G$ over $\dQ$ {\it optimally intersective for}  $G$ if the above lower bound $s(G)$ for $r(G)$ is attained, i.e. $f(x)$ is the product of $s(G)$ irreducible factors.  Accordingly, a Galois extension $K/\dQ$ with Galois group $G$ is called an {\it optimally intersective realization of} $G$ if it is the splitting field of a polynomial which is  optimally intersective for  $G$.  Among the (noncyclic) alternating groups $A_n$, $s(A_n)=2$ if and only if $n \in \{4,5,6,7,8\}$ see \cite{Bubboloni}.  Single examples of optimally intersective polynomials for $A_n$ for these values of $n$ have been found \cite{RS}.
It is also  known that there exist infinitely many disjoint optimally intersective realizations of $A_4$ \cite{spearman}.  This also follows from \cite{so} as a special case ($A_4$ being solvable), but the proof in \cite{spearman} is more direct and explicit, using specializations of a parametric polynomial for $A_4$.  We now use Theorem \ref{thm:inertia} to prove that the same result holds for $A_5$.

\vskip .5em
\begin{thm} \label{thm:intersectiveA5} There exist infinitely many disjoint optimally intersective realizations of $A_5$.
\end{thm}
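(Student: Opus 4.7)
The plan is to combine Theorem~\ref{thm:A5exp2} with the characterization in Proposition~\ref{thm:char} for the subgroup pair $H_1=A_4$, $H_2=D_5$. Since $s(A_5)=2$, I must first verify that $\{A_4,D_5\}$ realizes this minimum. Elements of $A_5$ have order $1,2,3,$ or $5$: those of order $3$ lie in some point-stabilizer $A_4$, those of order $5$ in some line-stabilizer $D_5$, and involutions in both; hence the conjugates of these two subgroups cover $A_5$. Their total intersection over all conjugates is a proper normal subgroup of the simple group $A_5$, so trivial.

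Next I would feed the infinitely many linearly disjoint $A_5$-realizations $K|\qq$ with all nontrivial inertia groups of order $2$, furnished by Theorem~\ref{thm:A5exp2}, into Proposition~\ref{thm:char}. The crucial verification is the decomposition-group condition: for every finite prime $\mathfrak{p}$ of $K$, the decomposition group $G(\mathfrak{p})$ lies in a conjugate of $A_4$ or $D_5$. I would establish this via the following group-theoretic lemma: \emph{any subgroup $D\subseteq A_5$ fitting in an exact sequence $1\to I\to D\to C\to 1$ with $|I|\le 2$ and $C$ cyclic lies in a conjugate of $A_4$ or $D_5$.} When $I$ is trivial, $D$ is cyclic of order $1,2,3,$ or $5$, and each such cyclic subgroup sits in some conjugate of $A_4$ (order $2$ or $3$) or $D_5$ (order $5$). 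When $|I|=2$, the centralizer in $A_5$ of any involution is the Klein four-group $V_4\subseteq A_4$; since $\operatorname{Aut}(C_2)$ is trivial, $C$ must centralize $I$, forcing $D\subseteq V_4\subseteq A_4$ after conjugation.

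This lemma applies in our situation because every decomposition group at a finite prime of $K$ is of this form: at unramified primes $D=\langle\text{Frob}\rangle$ is cyclic, while at tamely ramified primes (all ramified primes are tame since $2$ is unramified in $K$) $D$ is an extension of a cyclic Frobenius quotient by the cyclic inertia of order $2$. Proposition~\ref{thm:char} then identifies $K$ as the splitting field of an intersective polynomial $f=g_1g_2$, where $g_i$ is a minimal polynomial of a primitive element of the fixed field $K^{H_i}$, of respective degrees $5$ and $6$; since $s(A_5)=2$, this $f$ is optimally intersective. The linear disjointness of the $K$'s supplied by Theorem~\ref{thm:A5exp2} immediately yields infinitely many disjoint such realizations.

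I expect the main obstacle to be the group-theoretic lemma above, particularly the case $|I|=2$: the point is that the tame Frobenius-to-inertia conjugation action must be trivial (because $\operatorname{Aut}(C_2)=1$), so everything reduces to the fact that the centralizer of any involution in $A_5$ is precisely $V_4$. With this structural observation in hand, the rest of the argument is a routine assembly of Theorem~\ref{thm:A5exp2} and Proposition~\ref{thm:char}.
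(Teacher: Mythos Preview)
Your proposal is correct and follows the same overall strategy as the paper: take the $A_5$-realizations with all inertia of order $\le 2$ from Theorem~\ref{thm:A5exp2}, and feed them into Proposition~\ref{thm:char} with the covering pair $\{A_4,D_5\}$.

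The only difference lies in how the decomposition-group condition is verified. The paper (via Lemma~\ref{inertiaC3}) runs through the list of proper subgroups of $A_5$, observes that the unique obstruction is $S_3$, and then rules $S_3$ out because its only nontrivial normal subgroup is $C_3$, contradicting inertia of order $2$. You instead argue structurally: a decomposition group is cyclic-by-$I$ with $|I|\le 2$, and when $|I|=2$ centrality forces $D\subseteq C_{A_5}(I)=V_4\subseteq A_4$. Your route is arguably cleaner, since it avoids the subgroup classification and the separate exclusion of $S_3$; the paper's route has the advantage of making explicit \emph{which} decomposition group would be problematic and why the order-$2$ inertia hypothesis is exactly what is needed. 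One small remark: your parenthetical about tameness is not actually needed for your lemma, since $D/I$ is cyclic for any finite prime regardless of tameness; it is nonetheless true here because $N=1$ in the application of Theorem~\ref{thm:inertia}, so $2$ is unramified.
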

The proof uses the following
\begin{lem} \label{inertiaC3} To prove Theorem \ref{thm:intersectiveA5} it suffices to prove that there exist infinitely many disjoint Galois realizations of $A_5$ over $\dQ$ such that at no prime does $S_3$ appear as a decomposition group. \end{lem}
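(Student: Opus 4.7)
The plan is to apply Proposition \ref{thm:char} with subgroups witnessing $s(A_5)=2$. I will take $H_1=A_4$ (a point stabilizer in the natural degree-$5$ action) and $H_2=D_5$ (a normalizer of a Sylow $5$-subgroup). First I will verify that these witness $s(A_5)=2$: conjugates of $A_4$ exhaust all $3$-cycles and all double transpositions, while conjugates of $D_5$ exhaust all $5$-cycles, so together they cover $A_5$; and the intersection of the conjugates of either $H_i$ is a proper normal subgroup of the simple group $A_5$, hence trivial.

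Next I enumerate the possible decomposition groups $D\subseteq A_5$ at a finite prime. Any such $D$ has a subnormal chain $1\trianglelefteq P\trianglelefteq I\trianglelefteq D$ with $P$ a $p$-group and cyclic quotients $I/P$ and $D/I$, so $D$ is solvable. The subgroups of $A_5$ up to conjugacy are $\{1\}, C_2, C_3, V_4, C_5, S_3, D_5, A_4, A_5$; among these only $A_5$ itself is non-solvable and so cannot arise. I then check the remaining eight one by one: $\{1\}, C_2, C_3, V_4$, and $A_4$ all sit inside $A_4$; $C_5$ and $D_5$ sit inside $D_5$; and $S_3$ sits inside neither (it cannot lie in $A_4$, which has no subgroup of index $2$, nor in $D_5$, since $6\nmid 10$).

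Therefore, any $A_5$-Galois extension $K/\qq$ with no $S_3$-decomposition group at a finite prime satisfies condition $(2)$ of Proposition \ref{thm:char} with $(H_1,H_2)=(A_4,D_5)$, and so $K$ is the splitting field of a product of two irreducibles (of degrees $5$ and $6$) having a root in every $\qq_p$. Since $s(A_5)=2$, this is an optimally intersective realization of $A_5$, and infinitely many disjoint such realizations immediately yield Theorem \ref{thm:intersectiveA5}. The genuine work, deferred to the proof of the theorem itself, is to force the realizations produced by Theorem \ref{thm:A5exp2} to also avoid $S_3$-decomposition groups; within the lemma itself the only obstacle is the subgroup bookkeeping above---in particular the observation that $S_3$ is the unique subgroup of $A_5$ that can occur as a decomposition group but does not fit in a conjugate of $A_4$ or $D_5$.
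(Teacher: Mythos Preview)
Your proof is correct and follows essentially the same approach as the paper: both choose $H_1=A_4$ and $H_2=D_5$, note that decomposition groups are solvable (hence proper in $A_5$), and verify that $S_3$ is the unique subgroup of $A_5$ not contained in a conjugate of $A_4$ or $D_5$. The only cosmetic difference is that you enumerate the full subgroup lattice of $A_5$ explicitly, whereas the paper organizes the same case analysis via the natural degree-$5$ action (cyclic subgroups are handled by the covering property; noncyclic subgroups with a fixed point lie in a conjugate of $A_4$; fixed-point-free subgroups of order divisible by $5$ equal $D_5$; the residual case is $S_3$).
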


\begin{proof}
 For simple groups such as $A_5$, the trivial intersection condition on the conjugates of the subgroups $H_1,...,H_m$ in part (2) above, is superfluous, as it is satisfied automatically.  Thus without loss of generality the set $H_1,...,H_m$ may be assumed to consist of maximal subgroups of  $G$ if $G$ is simple.  Now $A_5$ is the union of the conjugates of the dihedral group $D_5$ and the alternating group $A_4$.  We need to show that if $S_3$ appears  as a decomposition group (in a given $A_5$ Galois extension of $\dQ$) at no prime, then every decomposition group is contained in a conjugate of one of the subgroups $D_5$ or $A_4$.  We first observe that every decomposition group is necessarily solvable (every finite Galois extension of a local field is solvable), hence every decomposition group in (the nonsolvable group) $A_5$ is proper.  Let $H$ be a proper subgroup of $A_5$.  We may assume $H$ noncyclic, as every element of $A_5$ is contained in a conjugate of one of the subgroups $D_5$ or $A_4$. If $H$ has a fixed point, then it is contained in a conjugate of $A_4$, so assume $H$ has no fixed point.  If $H$ has order divisible by $5$, then it is $D_5$. The only remaining possibility is $S_3$, generated by the $3$-cycle $(123)$ and the product of two transpositions $(12)(45)$, up to conjugacy.  \end{proof}
\vskip 1em
\it Proof of Theorem \ref{thm:intersectiveA5}. \rm
We now apply Lemma \ref{inertiaC3} and claim that there are infinitely many specializations of $v\in\dZ$ which yield disjoint splitting fields of $f(0,v,x)$ with Galois group $A_5$ and such that at no prime does $S_3$ appear as a decomposition group.  Suppose $S_3$ appears as a decomposition group at some prime $p$ for some specialization of $v$.  Then $p$ ramifies and the inertia group is necessarily the unique normal subgroup $C_3$.  We now apply Theorem \ref{thm:A5exp2}, which implies that there are infinitely many specializations of $v\in\dZ$ which yield disjoint splitting fields of $f(0,v,x)$ with Galois group $A_5$ and such that all nontrivial inertia groups have exponent two. \qed

\begin{thm} \label{thm:intersectiveA5} There exist infinitely many disjoint optimally intersective realizations of $PSL_3(3)$.
\end{thm}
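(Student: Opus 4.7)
The plan is to follow the blueprint of the preceding proof for $A_5$: identify an optimal pair of proper subgroups $H_1,H_2\le PSL_3(3)$ whose conjugates cover the group, reduce via Proposition~\ref{thm:char} to a condition purely on decomposition groups, and exploit Theorem~\ref{thm:psl33} together with the observation that inertia of order $\le 2$ forces decomposition groups to be abelian. I would take $H_1\cong 3^2\rtimes GL_2(3)$, the stabilizer of a point of $\mathbb{P}^2(\mathbb{F}_3)$ (a maximal subgroup of index $13$), and $H_2\cong 13\rtimes 3$, the normalizer of a Sylow $13$-subgroup (of index $144$). The element orders of $PSL_3(3)$ are exactly $\{1,2,3,4,6,8,13\}$; all orders other than $13$ are realized inside $H_1$ (which contains a full Sylow $2$- and Sylow $3$-subgroup of $PSL_3(3)$), while elements of order $13$ are conjugate into $H_2$. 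Hence $PSL_3(3)$ is a union of conjugates of $H_1\cup H_2$, giving $s(PSL_3(3))\le 2$; equality holds as no noncyclic group is a union of conjugates of a single proper subgroup. The trivial-intersection hypothesis of Proposition~\ref{thm:char} is automatic since $PSL_3(3)$ is simple. An analogue of Lemma~\ref{inertiaC3} therefore reduces the theorem to producing infinitely many disjoint $PSL_3(3)$-extensions of $\qq$ in which every decomposition group lies in a conjugate of $H_1$ or $H_2$.

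I would take the extensions supplied by Theorem~\ref{thm:psl33}, in which every nontrivial inertia subgroup has order $2$, and argue that the decomposition-group condition is then automatic. The key point is that whenever $I\triangleleft D=G(\fp)$ has order $\le 2$, one has $\operatorname{Aut}(I)=1$, so $I$ is central in $D$; since $D/I$ is cyclic (generated by a Frobenius lift), $D$ is a central extension of a cyclic group by a cyclic group, and is therefore abelian. It suffices to check that every abelian subgroup of $PSL_3(3)$ lies in a conjugate of $H_1$ or $H_2$. Up to isomorphism the abelian subgroups are $C_n$ for $n\in\{1,2,3,4,6,8,13\}$, together with $V_4$ and $(C_3)^2$: all cyclic subgroups of order $\ne 13$ lie in $H_1$ (for orders $2,4,8$ via the Sylow $2$-subgroup, for order $3$ via the Sylow $3$-subgroup, and for order $6$ via the product of the central involution $-I$ and an element of order $3$ in the Levi $GL_2(3)$); $C_{13}$ lies in $H_2$; $(C_3)^2$ is the unipotent radical of $H_1$; and any $V_4\le PSL_3(3)$ lies in $C_G(\iota)\cong GL_2(3)$ for any of its three involutions $\iota$, which fixes the $1$-eigenspace of $\iota$ and hence lies in a conjugate of $H_1$.

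The main obstacle is this last step: one has to rule out remaining ``mixed'' abelian candidates such as $C_2\times C_4$, $C_2\times C_8$, $C_2\times C_6\cong V_4\times C_3$, $C_{12}$, $C_{24}$ or $C_{26}$. These are excluded by structural features of $PSL_3(3)$: the Sylow $2$-subgroup is semidihedral $SD_{16}$, whose only abelian subgroups are cyclic or $V_4$, ruling out $C_2\times C_4$ and $C_2\times C_8$; a direct computation shows $C_G(V_4)=V_4$, which blocks $V_4\times C_3$; and the list of element orders of $PSL_3(3)$ excludes $C_{12}$, $C_{24}$ and $C_{26}$. Granting this classification, every decomposition group in the extensions supplied by Theorem~\ref{thm:psl33} is abelian and lies in a conjugate of $H_1$ or $H_2$, so Proposition~\ref{thm:char} yields an optimally intersective polynomial with Galois group $PSL_3(3)$ for each such extension, producing the desired infinitely many disjoint optimally intersective realizations.
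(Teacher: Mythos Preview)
Your argument is correct, but it takes a longer path than the paper's. Both proofs take $H_1$ to be a point stabilizer in the degree-$13$ action and $H_2$ to contain the elements of order $13$ (the paper uses the $13$-Sylow itself, you use its normalizer; either works). Both invoke Theorem~\ref{thm:psl33} to produce infinitely many disjoint $PSL_3(3)$-realizations with all inertia groups of order $\le 2$, and both reduce via Proposition~\ref{thm:char} to showing that every decomposition group lies in a conjugate of $H_1$ or $H_2$.

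The difference is in how this last step is carried out. You first prove the pleasant general fact that inertia of order $\le 2$ forces the decomposition group $D$ to be abelian, and then run through a classification of all abelian subgroups of $PSL_3(3)$, checking case by case (including the exclusion of $C_2\times C_4$, $C_2\times C_8$, $V_4\times C_3$, etc.\ via the structure of $SD_{16}$ and the computation $C_G(V_4)=V_4$). The paper bypasses this classification entirely: at a ramified prime, $D$ normalizes the order-$2$ inertia group, so $D\subseteq C_G(\iota)$ for an involution $\iota$; since $\iota$ has cycle type $(2^4.1^5)$, its centralizer fixes the $1$-eigenspace of $\iota$ and therefore lies in a conjugate of $H_1$. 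At an unramified prime $D$ is cyclic, and the only fixed-point-free elements are those of order $13$, which lie in $H_2$. So the paper's route is a one-line normalizer argument rather than a subgroup enumeration. Note that your own treatment of the $V_4$ case already contains exactly this idea ($V_4\subseteq C_G(\iota)\subseteq$ conjugate of $H_1$); applying it uniformly to any $D$ containing an involution would have collapsed your casework to the paper's argument. A small wording issue: by ``the central involution $-I$'' you presumably mean $-I_2$ in the Levi $GL_2(3)\le H_1$, since $-I_3\notin SL_3(3)$; with that reading your order-$6$ claim is fine.
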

\begin{proof}
Observe that the only fixed point free elements in $PSL_3(3)$, in its natural degree-$13$ action, are the elements of order $13$. This implies that $PSL_3(3)$ is the union of conjugates of subgroups $H_1$ and $H_2$, where $H_1$ is a point stabilizer in the natural action, and $H_2$ is a $13$-Sylow subgroup.
In analogy to the previous section, it suffices to provide infinitely many disjoint $PSL_3(3)$-realizations over $\qq$ such that all decomposition subgroups are contained in a conjugate of $H_1$ or $H_2$. In particular, it suffices to provide these realizations such that all {\it non-cyclic} decomposition subgroups are contained in a conjugate of $H_1$.\\
Lemma \ref{lem:psl33_pol} provides such realizations. Namely, all decomposition groups at ramified primes in the realizations given there are contained in the normalizer of a subgroup of order $2$ (and of cycle type $(2^4.1^5)$). Since this normalizer in $PSL_3(3)$ fixes a point, the assertion follows.
\end{proof}

\begin{rem}
    The first author has independently proved by other methods the existence of an optimally intersective realization for infinitely many $2$-coverable nonsolvable linear groups, and  also the existence of  infinitely many optimally intersective realizations of the groups $PSL_3(2)\cong PSL_2(7)$, $AGL_3(2)$, and $M_{11}$ using parametric polynomials.  See \cite{jk}.
\end{rem}

\bibliographystyle{plain}

\end{document}